\newtheorem{theorem}{Theorem}[section]
\newtheorem{lemma}[theorem]{Lemma}
\newtheorem{corollary}[theorem]{Corollary}
 \theoremstyle{definition}
 \newtheorem{remark}[theorem]{Remark}
 \newtheorem{example}[theorem]{Example}
\numberwithin{equation}{section}
\newcommand {\N}{\mathbb{N}} 
\newcommand {\Z}{\mathbb{Z}} 
\newcommand {\Q}{\mathbb{Q}} 
\newcommand{\CC}{\mathcal{C}}
\DeclareMathOperator{\Inv}{Inv}
\DeclareMathOperator{\CA}{CA}
\DeclareMathOperator{\End}{End}
\DeclareMathOperator{\Mor}{Mor}
\DeclareMathOperator{\Sym}{Sym}
\DeclareMathOperator{\Map}{Map}
\begin{document}
\title{On  Residually Finite Semigroups of Cellullar Automata}
\author{Tullio Ceccherini-Silberstein}
\address{Dipartimento di Ingegneria, Universit\`a del Sannio, C.so
Garibaldi 107, 82100 Benevento, Italy}
\email{tceccher@mat.uniroma3.it}
\author{Michel Coornaert}
\address{Institut de Recherche Math\'ematique Avanc\'ee,
UMR 7501,                                             Universit\'e  de Strasbourg et CNRS,
                                                 7 rue Ren\'e-Descartes,
                                               67000 Strasbourg, France}
\email{coornaert@math.unistra.fr}
\subjclass[2010]{20E26, 20M30, 37B10, 37B15, 68Q80}
\keywords{cellular automaton, monoid, semigroup, residual finiteness, Hopficity}
\begin{abstract}
We prove that if $M$ is a monoid and $A$  a finite set with more than one element, then the residual finiteness of $M$ is equivalent to that of the monoid consisting of all cellular automata over $M$ with alphabet $A$.
\end{abstract}
\date{\today}
\maketitle

\section{Introduction}

In a concrete category,  a \emph{finite object} is an object whose underlying set is finite.   
A \emph{finiteness condition} is a property relative to  the objects of the category that is satisfied by all finite objects.
Finiteness is a trivial example of a finiteness condition. 
Hopficity and co-Hopficity provide  examples of finiteness conditions
that are non-trivial and worth studying in many  concrete categories, e.g.,  the category of groups, the category of rings, the category of compact Hausdorff spaces, etc.
(see the survey paper \cite{varadarajan} and the references therein).   
  We recall that an object $X$ in a concrete category $\CC$ is called
\emph{Hopfian}  if every surjective endomorphism of $X$ is injective and
\emph{co-Hopfian} if every injective endomorphism of $X$ is surjective.
Another interesting finiteness condition
 is \emph{residual finiteness}. An object $X$ in a concrete category $\CC$ is said to be \emph{residually finite} if, given any two distinct elements 
$x_1,x_2 \in X$, there exists a finite object $Y$ of $\CC$ and a $\CC$-morphism 
$\rho \colon X \to Y$ such that
$\rho(x_1) \not= \rho(x_2)$.
\par
Suppose  now that we are given a monoid  $M$  and a finite set $A$.
We say that a map $\tau \colon A^M \to A^M$ is a \emph{cellular automaton} over the monoid $M$ and the \emph{alphabet} $A$ if $\tau$ is continuous for the prodiscrete topology on $A^M$ and $M$-equivariant with respect to the shift action of $M$ on $A^M$ (see Section~\ref{sec:prelim} for more details).
It is clear from this definition that the set $\CA(M,A)$,
consisting of all cellular automata $\tau \colon A^M \to A^M$, 
 is a monoid for the composition of maps.
\par
The main result of the present note is the following statement which yields a characterization of residual finiteness for monoids in terms of cellular automata.

\begin{theorem}
\label{t:CA-res-finite}
Let $M$ be a monoid and let $A$ be a finite set with more than one element.
Then the following conditions are equivalent:
\begin{enumerate}[\rm (a)]
\item
the monoid $M$ is residually finite;
\item
the monoid $\CA(M,A)$ is residually finite.
\end{enumerate}
\end{theorem}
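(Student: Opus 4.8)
The plan is to establish the two implications separately, with (a)$\Rightarrow$(b) being the substantial one, since it requires manufacturing finite quotients of the typically huge monoid $\CA(M,A)$.

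For the implication (b)$\Rightarrow$(a) I would realize $M$ as a submonoid of $\CA(M,A)$. For each $m\in M$ define $\tau_m\colon A^M\to A^M$ by $(\tau_m x)(n)=x(mn)$. A direct check shows that $\tau_m$ is continuous and equivariant for the shift action $(m\cdot x)(n)=x(nm)$, so $\tau_m\in\CA(M,A)$, that $\tau_m\tau_{m'}=\tau_{m'm}$, and that $\tau_1=\Id$; hence $m\mapsto\tau_m$ is an anti-homomorphism, which is injective because $\lvert A\rvert\ge 2$ (evaluate at $n=1$: if $m\neq m'$ choose $x$ with $x(m)\neq x(m')$). Its image is therefore a submonoid of $\CA(M,A)$ isomorphic to the opposite monoid $M^{\mathrm{op}}$. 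Since submonoids of residually finite monoids are residually finite (separating homomorphisms to finite monoids restrict), and since a monoid is residually finite if and only if its opposite is (finite monoids are stable under $\mathrm{op}$, and the underlying separating sets are unchanged), the residual finiteness of $\CA(M,A)$ forces that of $M$.

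For (a)$\Rightarrow$(b), suppose $\tau_1\neq\tau_2$ in $\CA(M,A)$. As $A$ is finite, $A^M$ is compact and each $\tau_i$ admits a finite memory set and local rule $\mu_i\colon A^{S_i}\to A$ with $(\tau_i x)(m)=\mu_i\bigl((x(sm))_{s\in S_i}\bigr)$. By equivariance I may translate the witnessing coordinate to $1$, so there is $x_0$ with $(\tau_1 x_0)(1)\neq(\tau_2 x_0)(1)$, and these two values depend only on $x_0|_{S}$, where $S=S_1\cup S_2$ is finite. Now I invoke residual finiteness of $M$: taking a finite product of separating homomorphisms yields a finite monoid $N$ and a surjection $\pi\colon M\to N$ that is \emph{injective on} $S$. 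Consider the set $X_\pi=\{x\in A^M:\ x(m)=x(m')\ \text{whenever}\ \pi(m)=\pi(m')\}$, which is $M$-equivariantly homeomorphic to $A^N$ via $\pi^{*}y=y\circ\pi$ (the $M$-action on $A^N$ factoring through $\pi$ being the $N$-shift). The key point is that $X_\pi$ is invariant under \emph{every} $\tau\in\CA(M,A)$: if $\pi(m)=\pi(m')$ then $\pi(sm)=\pi(s)\pi(m)=\pi(s)\pi(m')=\pi(sm')$ for all $s$ in the memory set, because $\pi$ is a homomorphism, so the local rule returns the same value at $m$ and $m'$. Restriction to $X_\pi$ therefore defines a monoid homomorphism $\Phi_\pi\colon\CA(M,A)\to\CA(N,A)$, and $\CA(N,A)$ is finite since $N$ is. Finally, because $\pi$ is injective on $S$, the partial configuration $x_0|_S$ extends to some $\tilde x\in X_\pi$ (set $\tilde x=\pi^{*}y$ for any $y\colon N\to A$ with $y(\pi(s))=x_0(s)$); then $(\tau_i\tilde x)(1)=(\tau_i x_0)(1)$, so $\Phi_\pi(\tau_1)$ and $\Phi_\pi(\tau_2)$ disagree at $(y,1_N)$. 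Thus $\Phi_\pi$ separates $\tau_1$ from $\tau_2$, and $\CA(M,A)$ is residually finite.

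The main obstacle, and the conceptual heart of the argument, is the construction in (a)$\Rightarrow$(b): producing genuine finite quotients of $\CA(M,A)$. Everything hinges on the two facts that the "reduction modulo $\pi$" subshift $X_\pi$ is invariant under all cellular automata (so that restriction is a monoid homomorphism) and that this restriction lands in the finite monoid $\CA(N,A)$; verifying invariance is exactly where one exploits that $\pi$ is a monoid homomorphism, and arranging $\pi$ to be injective on the finite memory window is what allows the separation of $\tau_1,\tau_2$ to persist after restriction. The embedding direction (b)$\Rightarrow$(a) is comparatively routine once one checks the equivariance of $\tau_m$ and remembers that residual finiteness is insensitive to passing to the opposite monoid.
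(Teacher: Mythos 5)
Your proof is correct, and while your implication (b) $\Rightarrow$ (a) coincides with the paper's argument (the injective anti-morphism $m \mapsto \tau_m$ embedding the opposite monoid of $M$ into $\CA(M,A)$, plus heredity of residual finiteness and its invariance under passing to opposites), your treatment of (a) $\Rightarrow$ (b) takes a genuinely different route. The paper invokes the dynamical characterization of residual finiteness (density of periodic configurations in $A^M$, quoted from an earlier paper of the authors) to find a \emph{periodic} $x_0$ with $\tau_1(x_0) \neq \tau_2(x_0)$; the finite orbit $Y = Mx_0$ determines a finite-index congruence $\gamma$ on $M$, and restriction to the finite $M$-invariant set $\Inv(\gamma) \supseteq Y$ --- whose invariance under every cellular automaton follows from equivariance alone --- yields a separating morphism into the finite monoid $\Map(\Inv(\gamma))$. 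You instead work directly from the definition of residual finiteness: you build a finite quotient $\pi \colon M \to N$ injective on the union $S$ of the memory sets and restrict to $X_\pi \cong A^N$, which is $\Inv(\ker\pi)$ in the paper's notation. The price is an appeal to the Curtis--Hedlund--Lyndon theorem for monoids (existence of finite memory sets), which the paper never needs but which does hold by the usual compactness argument; you use it both to define $S$ and to check that $\tau_1,\tau_2$ still differ on $X_\pi$, since your $x_0$ need not lie in $X_\pi$ --- only a configuration agreeing with it on $S$ does, and injectivity of $\pi$ on $S$ is exactly what produces one. What you gain is a more self-contained and arguably more structural statement (a separating family of monoid morphisms $\CA(M,A) \to \CA(N,A)$ indexed by finite quotients $N$ of $M$) without the detour through periodic configurations; what the paper's route buys is brevity and the complete avoidance of local rules. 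A small simplification available to you: the invariance of $X_\pi$ under all cellular automata, which you verify via local rules, already follows from equivariance alone, since $X_\pi = \{x \in A^M : m_1 x = m_2 x \text{ for all } (m_1,m_2) \in \ker\pi\}$.
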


Residual finiteness is obviously hereditary, in the sense that every subobject of a residually finite object is itself residually finite.
Thus, an immediate consequence of implication (a) $\Rightarrow$ (b) in 
Theorem~\ref{t:CA-res-finite} is the following: 

\begin{corollary}
\label{c:subsemigroup-res-fin}
Let $M$ be a residually finite monoid and let $A$ be a finite set.
Then every subsemigroup of  $\CA(M,A)$ is residually finite.
\qed
\end{corollary}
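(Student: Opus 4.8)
The plan is to deduce the corollary directly from implication (a) $\Rightarrow$ (b) of Theorem~\ref{t:CA-res-finite} — which I am entitled to assume — together with the heredity of residual finiteness recorded just before it. Two points deserve attention, because the corollary is stated in slightly greater generality than the theorem: first, the theorem assumes that $A$ has more than one element, whereas the corollary allows an arbitrary finite alphabet; and second, the theorem asserts residual finiteness of $\CA(M,A)$ \emph{as a monoid}, while ``subsemigroup'' is a notion of subobject in the category of semigroups, in which residual finiteness is tested against semigroup homomorphisms into finite semigroups. I would make both of these explicit rather than absorb them into the word ``obviously''.

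I would treat first the main case $|A| \ge 2$. Here Theorem~\ref{t:CA-res-finite} gives that the monoid $\CA(M,A)$ is residually finite, and this is the only substantive input. Let $T \subseteq \CA(M,A)$ be any subsemigroup and let $\tau_1 \ne \tau_2$ be elements of $T$. Regarding $\tau_1,\tau_2$ as distinct elements of the monoid $\CA(M,A)$, residual finiteness supplies a finite monoid $N$ and a monoid homomorphism $\rho \colon \CA(M,A) \to N$ with $\rho(\tau_1) \ne \rho(\tau_2)$. The restriction $\rho|_T \colon T \to N$ is then a semigroup homomorphism into a finite semigroup with $\rho|_T(\tau_1) \ne \rho|_T(\tau_2)$. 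As $\tau_1 \ne \tau_2$ were arbitrary, $T$ is residually finite. In categorical terms this is exactly the heredity principle recorded above, instantiated in the category of semigroups: the observation that makes the transport work is that a separating monoid homomorphism is in particular a semigroup homomorphism into a finite semigroup, so residual finiteness of the monoid $\CA(M,A)$ entails residual finiteness of its underlying semigroup, and every subsemigroup inherits it by restriction.

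It remains to dispose of the degenerate case $|A| \le 1$, which the theorem does not cover. If $A$ has at most one element, then $A^M$ is either empty or a single point, so the identity is the only self-map of $A^M$ and $\CA(M,A)$ is the trivial monoid; being finite, it is residually finite, and hence so is each of its subsemigroups. The main — and essentially only genuine — obstacle in this argument is conceptual rather than computational: one must apply the heredity principle in the correct category and remember that a subsemigroup need not contain the identity of $\CA(M,A)$, so the reduction has to pass through semigroup homomorphisms. Once the residual finiteness provided by the theorem is transported from the monoid category to the semigroup category, the corollary follows at once.
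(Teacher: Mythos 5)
Your proof is correct and follows exactly the route the paper intends: the corollary is deduced from implication (a) $\Rightarrow$ (b) of Theorem~\ref{t:CA-res-finite} together with heredity of residual finiteness, which is why the paper gives no separate argument. Your explicit treatment of the degenerate case $|A|\le 1$ (where $\CA(M,A)$ is trivial) and of the monoid-versus-semigroup point is a careful and welcome elaboration of details the paper leaves implicit, but it does not change the approach.
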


In \cite{malcev-representation}, it was shown by Mal'cev that 
every  finitely generated residually finite semigroup  is Hopfian and  
has a residually finite monoid of endomorphisms. 
Combining Corollary~\ref{c:subsemigroup-res-fin} with these results of Mal'cev,
we get  the following.

\begin{corollary}
\label{c:sub-hopfian}
Let $M$ be a residually finite monoid and let $A$ be a finite set.
Then every finitely generated subsemigroup of $\CA(M,A)$ is Hopfian.
\qed
\end{corollary}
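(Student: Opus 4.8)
The plan is to deduce the statement directly by combining Corollary~\ref{c:subsemigroup-res-fin} with the theorem of Mal'cev quoted above, so the proof of the corollary itself should be essentially a one-line assembly of two already-established inputs. Let $S$ be a finitely generated subsemigroup of $\CA(M,A)$. First I would invoke Corollary~\ref{c:subsemigroup-res-fin}: since $M$ is residually finite and $A$ is finite, that corollary guarantees that \emph{every} subsemigroup of $\CA(M,A)$ is residually finite, and in particular $S$ is a residually finite semigroup. Thus $S$ is simultaneously finitely generated (by hypothesis) and residually finite (by Corollary~\ref{c:subsemigroup-res-fin}).

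Next I would apply Mal'cev's result that a finitely generated residually finite semigroup is Hopfian. Applied to $S$, this at once yields that every surjective endomorphism of $S$ is injective, i.e.\ that $S$ is Hopfian, which is exactly the desired conclusion.

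The points that require attention are bookkeeping ones rather than a genuine obstacle. One must make sure the finiteness notions are compatible across the categories involved: $\CA(M,A)$ is residually finite as a \emph{monoid} by Theorem~\ref{t:CA-res-finite}, and one should observe that a residually finite monoid is in particular residually finite when regarded as a semigroup (a finite monoid is a finite semigroup, and a separating monoid morphism is a separating semigroup morphism), so that heredity to the subsemigroup $S$ takes place in the category of semigroups, which is where Mal'cev's theorem lives. All of this is precisely what is already packaged in Corollary~\ref{c:subsemigroup-res-fin}. One should also dispose of the degenerate cases $|A|\le 1$, in which $A^M$ is a single point (or empty) and $\CA(M,A)$ reduces to the trivial monoid, so that every subsemigroup is finite and hence trivially Hopfian; these lie outside the scope of Theorem~\ref{t:CA-res-finite} but are still covered by the statement of Corollary~\ref{c:subsemigroup-res-fin}. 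Since all the substance is carried by Theorem~\ref{t:CA-res-finite} and Mal'cev's theorem, I expect no new difficulty to arise at the level of the corollary.
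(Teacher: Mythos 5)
Your proposal is correct and follows exactly the paper's intended argument: Corollary~\ref{c:subsemigroup-res-fin} gives residual finiteness of the subsemigroup, and Mal'cev's Theorem~\ref{t:res-fin-hopfien} then yields Hopficity. The extra remarks on category compatibility and the degenerate case $|A|\le 1$ are sound but do not change the route.
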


\begin{corollary}
\label{c:endo-subsemi-res-fin}
Let $M$ be a residually finite monoid and let $A$ be a finite set.
Suppose that $T$ is a  finitely generated subsemigroup of $\CA(M,A)$.
Then the monoid $\End(T)$ of endomorphisms of $T$ is residually finite. 
 \qed
\end{corollary}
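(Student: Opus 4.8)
The plan is to obtain this as a one-step combination of Corollary~\ref{c:subsemigroup-res-fin} with Mal'cev's theorem quoted just above it. The point is that both of the substantive ingredients are already in hand: residual finiteness of arbitrary subsemigroups of $\CA(M,A)$ (when $M$ is residually finite and $A$ is finite), and the result of \cite{malcev-representation} that a finitely generated residually finite semigroup has a residually finite endomorphism monoid. So the corollary amounts to checking that the hypotheses of these two results are met and chaining them together.

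First I would apply Corollary~\ref{c:subsemigroup-res-fin} to the semigroup $T$. Since $M$ is residually finite and $A$ is finite by hypothesis, that corollary asserts that \emph{every} subsemigroup of $\CA(M,A)$ is residually finite; in particular $T$ is residually finite. This is the only place where the cellular-automaton structure, and hence the hard direction (a) $\Rightarrow$ (b) of Theorem~\ref{t:CA-res-finite}, enters; after this step the problem becomes purely one about abstract semigroups.

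Next I would invoke Mal'cev's theorem. By hypothesis $T$ is finitely generated, and by the previous step $T$ is residually finite, so $T$ is a finitely generated residually finite semigroup. Mal'cev's result \cite{malcev-representation} then applies and gives that $\End(T)$ is residually finite, which is precisely the assertion of the corollary.

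I do not expect a genuine obstacle here, since the corollary is designed as an immediate consequence of the preceding two facts; the entire difficulty has been absorbed into Theorem~\ref{t:CA-res-finite} (and thus into Corollary~\ref{c:subsemigroup-res-fin}) and into the cited theorem of Mal'cev. The only thing that genuinely requires attention is verifying that the two hypotheses demanded by Mal'cev's theorem, namely finite generation and residual finiteness of $T$, both hold; finite generation is assumed outright, and residual finiteness is exactly what Corollary~\ref{c:subsemigroup-res-fin} supplies.
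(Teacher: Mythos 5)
Your proof is correct and matches the paper exactly: the corollary is stated there as an immediate consequence of Corollary~\ref{c:subsemigroup-res-fin} (which gives residual finiteness of $T$) combined with Mal'cev's Theorem~\ref{t:aut-res-fini} (which gives residual finiteness of $\End(T)$ for a finitely generated residually finite semigroup). Nothing is missing.
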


The next section precises the terminology used and collects some background material.
For the convenience of the reader, we have also included a proof of the results of Mal'cev mentioned above.
The proof of Theorem~\ref{t:CA-res-finite} is given in the final section.

\section{Preliminaries}
\label{sec:prelim}

\subsection{Semigroups and monoids}
A \emph{semigroup} is a set equipped with an associative binary operation. 
We shall use a multiplicative notation for the operation on semigroups.
If $S$ and $T$ are semigroups, a \emph{semigroup morphism} from $S$ to $T$ is a map 
$\varphi \colon S \to T$ such that
$\varphi(s_1 s_2) = \varphi(s_1) \varphi(s_2)$ for all $s_1,s_2 \in S$.
We denote by $\Mor(S,T)$ the set consisting of all semigroup morphisms from $S$ to $T$.
A relation $\gamma$ on a semigroup $S$ is called a \emph{congruence relation} if there exist a semigroup $T$ and a semigroup morphism $\varphi \colon S \to T$ such that 
$\gamma$ is the \emph{kernel relation} associated with  $\varphi$, i.e., the equivalence relation defined by
$$
\gamma := \{(s_1,s_2) \in S \times S : \varphi(s_1) = \varphi(s_2) \}.
$$
Equivalently, an equivalence relation $\gamma \subset S \times S$ on $S$ is a congruence relation if and only if $(s_1,s_2) \in \gamma$ implies $(s s_1,s s_2) \in \gamma$ and $(s_1 s, s_2 s) \in \gamma$ for all $s,s_1,s_2 \in S$.
\par
Suppose that $\gamma$ is a congruence relation on a semigroup $S$. 
 Then there is a natural semigroup structure on the quotien set $S/\gamma$.
This semigroup structure is the only one  for which the canonical map from $S$ onto 
$S/\gamma$ (i.e., the map sending each $s \in S$ to its $\gamma$-class $[s] \in S/\gamma$) is a semigroup morphism.
Moreover, $\gamma$ is the kernel relation associated with this semigroup morphism.
One says that the congruence relation $\gamma$ is of \emph{finite index} if the quotient semigroup $S/\gamma$ is finite.
\par
A \emph{monoid} is a semigroup admitting an identity element.
The identity element of a monoid $M$ is denoted $1_M$.
If $M$ and $N$ are monoids, a monoid morphism from $M$ to $N$ is a semigroup morphism from $M$ to $N$ that sends $1_M$ to $1_N$.
Suppose that  $\gamma$ is a congruence relation on a monoid $M$. 
Then the quotient semigroup 
$M/\gamma$ is a monoid. Moreover, the canonical semigroup morphism  from $M$ onto $M/\gamma$ is a monoid morphism. 
\par

\subsection{Residually finite semigroups}
It is clear from the general definition of residual finiteness given in the Introduction 
that a group is residually finite as a group if and only if it is residually finite as a monoid and that a monoid is residually finite as a monoid if and only if it is residually finite as a semigroup. 
\par
The class of residually finite semigroups includes all free groups and hence (since residual finiteness is a  hereditary property) all free monoids and all free semigroups,
all polycyclic groups~\cite{hirsch} and hence all finitely generated nilpotent groups, 
all finitely generated commutative semigroups~\cite{malcev} (see also~\cite{lallement} and~\cite{carlisle}),
all finitely generated  semigroups that are both regular in the sense of von Neumann and nilpotent in the sense of Mal'cev~\cite{lallement-nilpotent},
and all finitely generated semigroups of matrices over commutative 
rings~\cite{malcev-representation}, \cite{stallings-notes}.    
\par
The following two fundamental results about finitely generated residually finite semigroups are due to Mal'cev~\cite{malcev-representation} (see also~\cite{evans-1970}).
 
\begin{theorem}[Mal'cev]
\label{t:res-fin-hopfien}
Every finitely generated residually finite semigroup is Hopfian.
\end{theorem}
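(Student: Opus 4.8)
The plan is to fix a surjective endomorphism $\varphi \colon S \to S$ and show it is injective by exploiting the interplay between $\varphi$ and the finite-index congruences of $S$. For each integer $n \geq 1$, let $C_n$ denote the set of all congruence relations on $S$ of index at most $n$. The first and crucial step is to prove that $C_n$ is finite. This is where finite generation enters: fix a finite generating set $X = \{x_1, \dots, x_k\}$ of $S$. Any congruence $\gamma \in C_n$ is the kernel relation of the canonical morphism $\pi_\gamma \colon S \to S/\gamma$ onto a semigroup of order at most $n$, and this morphism is completely determined by the tuple of images $(\pi_\gamma(x_1), \dots, \pi_\gamma(x_k))$. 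Since there are only finitely many multiplication tables on a set of cardinality at most $n$, and only finitely many assignments of the $k$ generators into such a set, there are only finitely many morphisms from $S$ onto a semigroup of order at most $n$, and hence only finitely many congruences in $C_n$.

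Second, I would study the pullback operation $\Phi \colon \gamma \mapsto \varphi^{-1}(\gamma)$, where $\varphi^{-1}(\gamma) = \{(s,t) \in S \times S : (\varphi(s), \varphi(t)) \in \gamma\}$. For $\gamma \in C_n$ with canonical morphism $\pi_\gamma$, the composite $\pi_\gamma \circ \varphi$ is a morphism whose kernel relation is exactly $\varphi^{-1}(\gamma)$; this identification shows in particular that $\varphi^{-1}(\gamma)$ is genuinely a congruence. Since $\varphi$ is surjective, $\pi_\gamma \circ \varphi$ is surjective onto $S/\gamma$, so $S/\varphi^{-1}(\gamma) \cong S/\gamma$ and therefore $\Phi$ maps $C_n$ into $C_n$. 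The key observation is that $\Phi$ is injective: using surjectivity of $\varphi$ once more, every element of $\gamma$ can be written as $(\varphi(s), \varphi(t))$ with $(s,t) \in \varphi^{-1}(\gamma)$, so $\gamma$ is recovered from $\varphi^{-1}(\gamma)$; hence $\varphi^{-1}(\gamma_1) = \varphi^{-1}(\gamma_2)$ forces $\gamma_1 = \gamma_2$. Being an injective self-map of the finite set $C_n$, the map $\Phi$ is a bijection.

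Finally I would derive injectivity of $\varphi$ by contradiction. Suppose $s_1 \neq s_2$ but $\varphi(s_1) = \varphi(s_2)$. By residual finiteness there is a morphism onto a finite semigroup separating $s_1$ and $s_2$, whose kernel is a finite-index congruence $\gamma$ with $(s_1, s_2) \notin \gamma$; taking $n = |S/\gamma|$ we have $\gamma \in C_n$. Since $\Phi$ is surjective on $C_n$, there exists $\gamma' \in C_n$ with $\varphi^{-1}(\gamma') = \gamma$. Then $(s_1, s_2) \notin \gamma$ means $(\varphi(s_1), \varphi(s_2)) \notin \gamma'$; but $\varphi(s_1) = \varphi(s_2)$ forces $(\varphi(s_1), \varphi(s_2))$ to lie in $\gamma'$ by reflexivity, a contradiction. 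Hence $\varphi$ is injective and $S$ is Hopfian.

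I expect the main obstacle to be the finiteness of $C_n$ in the first step, since this is the only place where finite generation is genuinely used, and it requires the bookkeeping argument reducing congruences to morphisms into the finitely many small semigroups. The remaining steps are essentially formal manipulations of kernel relations, the one delicate point being to verify that $\varphi^{-1}(\gamma)$ is a congruence rather than merely an equivalence relation, which is handled by realizing it as the kernel relation of $\pi_\gamma \circ \varphi$.
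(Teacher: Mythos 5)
Your proof is correct and is essentially the paper's argument transposed from morphisms to congruences: where the paper notes that $u \mapsto u \circ \psi$ is an injective, hence bijective, self-map of the finite set $\Mor(S,T)$ and lifts the separating morphism $\rho$ through $\psi$, you note that $\gamma \mapsto \varphi^{-1}(\gamma)$ is an injective, hence bijective, self-map of the finite set $C_n$ and lift the separating congruence through $\varphi$. The finiteness of $C_n$ rests on exactly the same use of finite generation as the paper's finiteness of $\Mor(S,T)$, so the two proofs differ only in bookkeeping, not in substance.
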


\begin{proof}
Let $S$ be a finitely generated residually finite semigroup.
Suppose that $\psi \colon S \to S$ is a surjective endomorphism of $S$.
Let $s_1$ and $ s_2$ be distinct elements in $S$.
Since $S$ is residually finite, there exists a finite semigroup $T$ and a semigroup morphism
$\rho \colon S \to T$ such that $\rho(s_1) \not= \rho(s_2)$. 
   Consider the map
$$
\Phi \colon \Mor(S,T) \to \Mor(S,T)
$$
 defined by $\Phi(u) = u \circ \psi$ for all $u \in \Mor(S,T)$.
Observe that  $\Phi$ is injective since $\psi$ is surjective. 
On the other hand, 
as $S$ is finitely generated and $T$ is finite, the set $\Mor(S,T)$ is finite. 
Therefore  $\Phi$ is also surjective. In particular, there exists a morphism
$u_0 \in \Mor(S,T)$ such that $\rho = \Phi(u_0) = u_0 \circ \psi$. 
Since $\rho(s_1)  \not= \rho(s_2)$,
this implies that $\psi(s_1) \not= \psi(s_2)$.
We deduce that $\psi$ is injective.
This  shows that $S$ is Hopfian.
\end{proof}

\begin{theorem}[Mal'cev]
\label{t:aut-res-fini}
Let $S$ be a finitely generated residually finite semigroup.
Then the monoid $\End(S)$ is residually finite.
\end{theorem}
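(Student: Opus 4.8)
The plan is to imitate and slightly extend the argument used in the proof of Theorem~\ref{t:res-fin-hopfien}, this time exploiting the natural action of $\End(S)$ on the \emph{finite} set $\Mor(S,T)$ by precomposition. First I would fix two distinct endomorphisms $\phi_1,\phi_2 \in \End(S)$. Since $\phi_1 \not= \phi_2$, there is some $s \in S$ with $\phi_1(s) \not= \phi_2(s)$. Using that $S$ is residually finite, I would choose a finite semigroup $T$ and a semigroup morphism $\rho \colon S \to T$ with $\rho(\phi_1(s)) \not= \rho(\phi_2(s))$; equivalently, $\rho \circ \phi_1 \not= \rho \circ \phi_2$ as elements of $\Mor(S,T)$. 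Exactly as in the proof of Theorem~\ref{t:res-fin-hopfien}, the finite generation of $S$ together with the finiteness of $T$ guarantees that $\Mor(S,T)$ is a finite set.

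Next I would let $\End(S)$ act on $\Mor(S,T)$ by precomposition: to $\phi \in \End(S)$ associate the self-map $u \mapsto u \circ \phi$ of $\Mor(S,T)$, which is well defined because the composite of two semigroup morphisms is again a semigroup morphism. This yields a map $\alpha_0 \colon \End(S) \to \Map(\Mor(S,T),\Mor(S,T))$. A direct computation shows $\alpha_0(\phi \circ \psi) = \alpha_0(\psi) \circ \alpha_0(\phi)$, so $\alpha_0$ reverses the order of composition and is an anti-morphism of monoids rather than a morphism. I would therefore let $F$ be the opposite monoid of $\Map(\Mor(S,T),\Mor(S,T))$. Then $F$ is a finite monoid, and $\alpha_0$, viewed as a map $\alpha \colon \End(S) \to F$, becomes a genuine monoid morphism: it sends $\Id_S$ to the identity self-map of $\Mor(S,T)$, and the reversal built into the opposite product precisely cancels the reversal in $\alpha_0$.

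Finally I would check that $\alpha$ separates $\phi_1$ and $\phi_2$. Evaluating the two self-maps $\alpha(\phi_1)$ and $\alpha(\phi_2)$ at the element $\rho \in \Mor(S,T)$ produces $\rho \circ \phi_1$ and $\rho \circ \phi_2$, which are distinct by the choice of $\rho$. Hence $\alpha(\phi_1) \not= \alpha(\phi_2)$ in the finite monoid $F$, which exhibits the finite quotient required by the definition of residual finiteness and shows that $\End(S)$ is residually finite.

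I expect the only genuinely delicate point to be the bookkeeping forced by the anti-morphism $\alpha_0$: since precomposition reverses the order of composition, one must pass to the opposite monoid (or, equivalently, phrase the action as a right action) in order to land a bona fide monoid morphism in a finite monoid. Everything else is a direct transcription of ideas already present in the proof of Theorem~\ref{t:res-fin-hopfien}, with the finiteness of $\Mor(S,T)$—which is exactly where the hypothesis that $S$ is finitely generated is used—carrying the essential weight of the argument.
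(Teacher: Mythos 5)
Your proof is correct, but it takes a genuinely different route from the paper's. The paper builds the fully invariant congruence $\gamma := \bigcap_{\psi \in \Mor(S,T)} \gamma_\psi$, invokes Lemma~\ref{l:int-ss-gpes-ind-fini} to see that $\gamma$ has finite index, and then sends each $\alpha \in \End(S)$ to the induced endomorphism $\overline{\alpha}$ of the finite quotient $S/\gamma$; separation of $\alpha_1$ and $\alpha_2$ is read off from $[\alpha_1(s_0)] \neq [\alpha_2(s_0)]$ because $\gamma \subset \gamma_\rho$. You instead let $\End(S)$ act on the finite set $\Mor(S,T)$ by precomposition and land in (the opposite of) the full transformation monoid $\Map(\Mor(S,T))$, separating $\phi_1$ from $\phi_2$ by evaluating at $\rho$. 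Your argument is shorter and more elementary: it bypasses the congruence $\gamma$, the auxiliary lemma on intersections of finite-index congruences, and the quotient semigroup entirely, and it is a near-verbatim transcription of the mechanism in the proof of Theorem~\ref{t:res-fin-hopfien}. What the paper's version buys in exchange is structural information: it exhibits a canonical finite quotient $S/\gamma$ of $S$ itself, preserved by every endomorphism, through whose endomorphism monoid the separating map factors. Your handling of the only delicate point is also sound --- precomposition is an anti-morphism, and either passing to the opposite monoid as you do, or noting (as the paper does elsewhere, in the proof of (b) $\Rightarrow$ (a) of Theorem~\ref{t:CA-res-finite}) that residual finiteness is insensitive to passing to the opposite semigroup, closes the gap. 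Both proofs place the full weight of the finite-generation hypothesis on the finiteness of $\Mor(S,T)$.
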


Let us first establish the following auxiliary result.

\begin{lemma} 
\label{l:int-ss-gpes-ind-fini}
Let $S$ be a semigroup. 
Suppose that  $\gamma_1$ and $\gamma_2$ are congruence relations of finite index 
on $S$. 
Then the congruence relation $\gamma := \gamma_1 \cap \gamma_2$ is also of finite index   on $S$.
\end{lemma}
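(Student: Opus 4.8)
The plan is to establish two things: that $\gamma := \gamma_1 \cap \gamma_2$ is again a congruence relation, and that the quotient $S/\gamma$ is finite. For the first point, I would invoke the intrinsic characterization of congruences recalled above, namely that an equivalence relation on $S$ is a congruence if and only if it is stable under left and right multiplication by arbitrary elements of $S$. The intersection of two equivalence relations is plainly an equivalence relation, and if $(s_1,s_2) \in \gamma_1 \cap \gamma_2$, then applying the stability of $\gamma_1$ and of $\gamma_2$ separately yields $(s s_1, s s_2) \in \gamma_1$ and $(s s_1, s s_2) \in \gamma_2$, whence $(s s_1, s s_2) \in \gamma$; the argument for right multiplication is identical. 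Thus $\gamma$ is a congruence relation.

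For the finiteness of the index, the key idea is to realize $S/\gamma$ inside the product of the two given finite quotients. Let $\varphi_i \colon S \to S/\gamma_i$ (for $i = 1, 2$) denote the canonical morphisms, and consider the product morphism
$$
\varphi \colon S \to (S/\gamma_1) \times (S/\gamma_2), \qquad \varphi(s) = (\varphi_1(s), \varphi_2(s)),
$$
where the target carries the componentwise semigroup structure. Then $\varphi$ is a semigroup morphism, and its kernel relation is precisely $\gamma$: indeed, $\varphi(s_1) = \varphi(s_2)$ holds if and only if $\varphi_1(s_1) = \varphi_1(s_2)$ and $\varphi_2(s_1) = \varphi_2(s_2)$, that is, if and only if $(s_1,s_2) \in \gamma_1$ and $(s_1,s_2) \in \gamma_2$, which is to say $(s_1,s_2) \in \gamma$.

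Finally, I would pass to the quotient. The morphism $\varphi$ induces an injective map $\overline{\varphi} \colon S/\gamma \to (S/\gamma_1) \times (S/\gamma_2)$, sending the $\gamma$-class of $s$ to $\varphi(s)$ (well-defined and injective precisely because the kernel relation of $\varphi$ is $\gamma$). Since $\gamma_1$ and $\gamma_2$ are of finite index, both $S/\gamma_1$ and $S/\gamma_2$ are finite, so their product is finite; as $\overline{\varphi}$ is injective, $S/\gamma$ is finite as well, i.e., $\gamma$ is of finite index. I do not expect any genuine obstacle here: the whole argument is routine once one thinks to map $S$ diagonally into the product of the two quotients. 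The only point deserving a moment of care is the verification that the kernel relation of $\varphi$ coincides exactly with the intersection $\gamma_1 \cap \gamma_2$, but this follows at once from the definition of the componentwise operation on the product semigroup.
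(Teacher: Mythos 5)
Your argument is correct and is essentially the paper's own: the paper likewise embeds $S/\gamma$ into $S/\gamma_1 \times S/\gamma_2$ via $[s] \mapsto ([s]_1,[s]_2)$ and concludes finiteness from the finiteness of the two factors. Your additional verification that $\gamma_1 \cap \gamma_2$ is a congruence (via closure under left and right multiplication) is a harmless elaboration of a point the paper leaves implicit.
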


\begin{proof}
Two elements in $S$ are congruent modulo $\gamma$  if and only if 
they are both congruent  modulo $\gamma_1$ and modulo $\gamma_2$. Therefore, there is an injective map from 
$S/\gamma$ into $S/\gamma_1 \times S/\gamma_2$ given 
by $[s] \mapsto ([s]_1,[s]_2)$,
where $[s]$ (resp.~$[s]_1$, resp.~$[s]_2$) denote the class of $s \in S$ modulo
$\gamma$ (resp.~$\gamma_1$, resp.~$\gamma_2$). 
As the sets $S/\gamma_1$ and $S/\gamma_2$ are finite by our hypothesis, we deduce that
$S/\gamma$ is also finite, that is, $\gamma$ is of finite index on $S$.
\end{proof}

 \begin{proof}[Proof of Theorem \ref{t:aut-res-fini}]
Let $\alpha_1, \alpha_2 \in \End(S)$ such that $\alpha_1 \not= \alpha_2$. 
Then we can find an element $s_0 \in S$ such that $\alpha_1(s_0) \not= \alpha_2(s_0)$. 
As $S$ is residually finite, there exist
a finite semigroup $T$ and a semigroup morphism $\rho \colon S \to T$ satisfying
$\rho(\alpha_1(s_0))  \not= \rho(\alpha_2(s_0))$. Consider the set 
$\gamma \subset S \times S$ defined by
$$
\gamma := \bigcap_{\psi \in \Mor(S,T)} \gamma_\psi,
$$
where $\gamma_\psi$ denotes the kernel congruence relation associated with the 
semigroup morphism $\psi \colon S \to T$.
Observe first that $\gamma$ is a congruence relation  on $S$ since it is the intersection of a family of congruence relations on $S$.
On the other hand, for every $\alpha \in \End(S)$ and $(s_1,s_2) \in \gamma$, 
we have that
$(\alpha(s_1),\alpha(s_2)) \in \gamma$ since $\psi \circ \alpha \in \Mor(S,T)$ for every 
$\psi \in \Mor(S,T)$.
We deduce that $\alpha$ induces an endomorphism 
$\overline{\alpha}$ of $S/\gamma$, given by 
$\overline{\alpha}([s]) = [\alpha(s)] $, for all $s \in S$ (here $[s]$ denotes the $\gamma$-class of $s$).
The map $\alpha \mapsto \overline{\alpha}$ is clearly a morphism from $\End(S)$ into
$\End(S/\gamma)$. 
Now the set $\Mor(S,T)$ is finite since $S$ is finitely generated and $T$ is finite.
Moreover, as the semigroup $T$ is finite, the congruence relation $\gamma_\psi$ is of finite index on $S$ for every $\psi \in \Mor(S,T)$.
By applying  Lemma~\ref{l:int-ss-gpes-ind-fini},
  we deduce that the congruence relation $\gamma$ is of finite index on $S$. 
Thus, the semigroup $S/\gamma$ is finite
and hence the monoid $\End(S/\gamma)$ is also finite. On the other hand, we have
that 
$$
\overline{\alpha_1}([s_0] ) = [\alpha_1(s_0)]  \not= [\alpha_2(s_0)] = \overline{\alpha_2}([s_0]) 
$$
since $\gamma \subset \gamma_\rho$  and $\rho(\alpha_1(s_0)) \not= \rho(\alpha_2(s_0))$. 
Therefore
$\overline{\alpha_1} \not= \overline{\alpha_2}$. 
This shows that the monoid $\End(S)$ is residually finite.
\end{proof}

\subsection{Shift spaces}
Let $A$ be a finite set, called the \emph{alphabet}, and let $M$ be a monoid.
The set $A^M$, consisting of all maps $x  \colon M \to A$,
is called the set of \emph{configurations} over the monoid $M$ and the alphabet $A$.
We equip $A^M$ with its \emph{prodiscrete topology}, i.e., the product topology obtained by taking the discrete topology on each factor $A$ of
$A^M = \prod_{m \in M} A$.
Observe that  $A^M$ is a compact Hausdorff totally disconnected space since it is a product of compact Hausdorff totally disconnected spaces.
We also equip $A^M$ with the $M$-\emph{shift}, that is,
the action of the monoid $M$ on $A^M$ given by $(m,x) \mapsto m x$, where
$$
m x (m') = x(m' m)
$$
for all $x \in A^M$ and $m,m' \in M$.
\par
Let $\gamma$ be a congruence relation on $M$. 
We define the subset $\Inv(\gamma) \subset A^M$ by
$$
\Inv(\gamma) := \{ x \in A^M : m_1 x = m_2 x \text{  for all } (m_1,m_2) \in \gamma \}.
$$
Observe that $\Inv(\gamma)$ is $M$-\emph{invariant}, i.e., $m x \in \Inv(\gamma)$ for all $m \in M$ and $x \in \Inv(\gamma)$.
One immediately checks that $\Inv(\gamma)$ consists of all configurations $x \in A^M$ that are constant on each $\gamma$-class. This implies in particular that 
the set $\Inv(\gamma)$ is finite whenever $\gamma$ is of finite index. 
\par
A configuration $x \in A^M$ is called \emph{periodic} if its orbit
$$
M x := \{ m x : m \in M \}
$$
is finite.

Residually finite monoids are characterised by the density of periodic configurations in their shift spaces. More precisely, we have the following result
(see~\cite[Proposition 2.14]{ccs-surjunctive-monoids}).

\begin{theorem}
\label{t:res-finite-dynam}
Let $M$ be a monoid and let $A$ be a finite set
with more than one element.
Then the  following conditions are equivalent:
\begin{enumerate}[{\rm (a)}]
\item 
the monoid $M$ is residually finite;
\item 
the  set   of  periodic configurations of $A^M$   is
dense in $A^M$ for the prodiscrete topology.
\end{enumerate}
\qed
\end{theorem}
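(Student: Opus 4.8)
The plan is to prove the two implications separately, with the sets $\Inv(\gamma)$ attached to finite-index congruences $\gamma$ serving as the bridge between the algebra of $M$ and the dynamics on $A^M$. The one fact I would establish at the outset is that every $x \in \Inv(\gamma)$ is periodic when $\gamma$ has finite index: if $(m_1,m_2)\in\gamma$ then $m_1 x = m_2 x$ by definition of $\Inv(\gamma)$, so the orbit map $m\mapsto mx$ factors through the canonical projection $M\to M/\gamma$, whence $|Mx|\le|M/\gamma|<\infty$.

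For (a) $\Rightarrow$ (b), recall that the cylinders $\{x\in A^M:x|_\Omega=y|_\Omega\}$, with $y\in A^M$ and $\Omega\subseteq M$ finite, form a basis for the prodiscrete topology, so it suffices to place a periodic configuration in each such cylinder. Residual finiteness supplies, for any two distinct elements of $M$, a finite-index congruence separating them (the kernel of a separating morphism onto a finite monoid); choosing one such $\gamma_{m,m'}$ for every pair of distinct $m,m'\in\Omega$ and setting $\gamma:=\bigcap_{m\neq m'}\gamma_{m,m'}$, Lemma~\ref{l:int-ss-gpes-ind-fini} (applied finitely often) makes $\gamma$ of finite index, and distinct elements of $\Omega$ now lie in distinct $\gamma$-classes. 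I would then take $x$ to be constant on each $\gamma$-class, equal to $y(m)$ on the class of each $m\in\Omega$ (consistent since these classes are distinct) and equal to a fixed symbol elsewhere; then $x\in\Inv(\gamma)$ is periodic by the opening remark and satisfies $x|_\Omega=y|_\Omega$.

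For (b) $\Rightarrow$ (a), let $m_1\neq m_2$ in $M$. Since $|A|\ge2$ there is a $y\in A^M$ with $y(m_1)\neq y(m_2)$, and density yields a periodic $x$ in the cylinder over $\{m_1,m_2\}$, so $x(m_1)\neq x(m_2)$; evaluating at $1_M$ gives $(m_1 x)(1_M)=x(m_1)\neq x(m_2)=(m_2 x)(1_M)$, hence $m_1 x\neq m_2 x$. The orbit $Mx$ is finite and $M$-invariant, and the shift yields a monoid morphism $\phi\colon M\to\Map(Mx,Mx)$, $\phi(m)(z):=mz$, which is a morphism because $m(m'z)=(mm')z$. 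As $x\in Mx$ and $\phi(m_1)(x)=m_1 x\neq m_2 x=\phi(m_2)(x)$, the morphism $\phi$ separates $m_1$ and $m_2$ into the finite monoid $\Map(Mx,Mx)$, proving $M$ residually finite.

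The step I expect to require the most care is the return from a periodic configuration to a finite quotient of $M$ in (b) $\Rightarrow$ (a): the naive relation $\{(m,m'):mx=m'x\}$ is only left-compatible and need not be a two-sided congruence, so one cannot simply quotient $M$ by a stabilizer of $x$. Using instead the shift action of $M$ on the finite orbit $Mx$, and the induced morphism into the full transformation monoid $\Map(Mx,Mx)$, sidesteps this difficulty entirely and is automatically a monoid morphism into a finite monoid. In the converse direction the only point to verify is that the proposed $x$ is well defined on the $\gamma$-classes meeting $\Omega$, which is precisely what the separation property of $\gamma$ guarantees.
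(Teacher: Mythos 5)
Your proof is correct. A point of comparison worth noting: the paper does not actually prove Theorem~\ref{t:res-finite-dynam} --- it is stated with a reference to \cite[Proposition 2.14]{ccs-surjunctive-monoids} --- so there is no internal proof to measure against; what you have done is reconstruct the result entirely from the paper's own toolkit, and correctly so. Your (a)~$\Rightarrow$~(b) combines Lemma~\ref{l:int-ss-gpes-ind-fini} (iterated finitely) with the paper's observation that $\Inv(\gamma)$ consists exactly of the configurations constant on the $\gamma$-classes, which justifies both the well-definedness of your $x$ and your opening remark that elements of $\Inv(\gamma)$ are periodic when $\gamma$ has finite index. Your (b)~$\Rightarrow$~(a) uses the same device the paper employs, in the opposite direction, in its proof of Theorem~\ref{t:CA-res-finite}, (a)~$\Rightarrow$~(b): there a finite orbit $Y = Mx_0$ yields the finite-index congruence $\gamma = \{(m_1,m_2) : m_1 y = m_2 y \text{ for all } y \in Y\}$ and a morphism into the finite monoid $\Map(X)$; your morphism $\phi \colon M \to \Map(Mx)$ has precisely that $\gamma$ as its kernel relation, and your caveat is well taken --- the pointwise stabilizer $\{(m,m') : mx = m'x\}$ is indeed only left-compatible (since $(sm)x = s(mx)$ but $(ms)x = m(sx)$ involves the different configuration $sx$), which is exactly why one must quantify over the whole orbit, as the paper also does. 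Two trifles: when $|\Omega| \le 1$ your intersection defining $\gamma$ is empty, so take $\gamma = M \times M$ (a constant configuration then works); and the verification that $\phi$ is a genuine monoid morphism rather than an anti-morphism, i.e.\ that $(m_1(m_2 x))(m') = x(m' m_1 m_2) = ((m_1 m_2)x)(m')$ under the convention $(mx)(m') = x(m'm)$, deserves the one-line check you implicitly performed, since the companion map $m \mapsto \tau_m$ of Example~\ref{ex:tau-m} is an anti-morphism.
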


\subsection{Cellular automata}
Let $M$ be a monoid and let $A$ be a finite set.
A \emph{cellular automaton} over the monoid $M$ and the alphabet $A$ is a map 
$\tau \colon A^M \to A^M$
that is continuous for the prodiscrete topology on $A^M$ and
commutes with the shift action, i.e., satisfies $\tau( m x ) = m \tau(x) $ for all $m \in M$ and $x \in A^M$.
We denote by $\CA(M,A)$ te set consisting of all cellular automata $\tau \colon A^M \to A^M$.
It is clear from the above definition that $\CA(M,A)$ is a monoid for the composition of maps.

\begin{example}
\label{ex:tau-m}
If $ m \in M$, one immediately checks that  the map $\tau_m \colon A^M \to A^M$, defined by 
$\tau(x) = x \circ L_m$ for all $x \in A^M$, where $L_m \colon M \to M$ denotes the left-multiplication by $m$, is a cellular automaton.
Moreover, the map 
$m \to \tau_m$ yields an anti-monoid morphism from $M$ into $\CA(M,A)$.
This means that
$\tau_{1_M}$ is the identity map on $A^M$ and that
$\tau_{m_1 m_2} = \tau_{m_2} \circ \tau_{m_1}$ for all $m_1,m_2 \in M$.
This monoid anti-morphism is injective as soon as the alphabet $A$ has more than one element.
Indeed, let $m_1 ,m_2 \in M$ with $m_1 \not= m_2$.
Suppose that
  $a$ and $b$ are distinct elements in $A$ and consider the configuration $x \in A^M$ defined by $x(m_1) = a$ and $x(m) = b$ for all $m \in M \setminus \{m_1\}$.
  We then have $\tau_{m_1}(x) \not= \tau_{m_2}(x)$ since
  $$
  \tau_{m_1}(x)(1_M) = x(m_1) = a \not= b = x(m_2) = \tau_{m_2}(x)(1_M),
  $$
  and hence $\tau_{M_1} \not= \tau_{m_2}$.
  \end{example}

\section{Proof of the main  result}

In this section, we give the proof of Theorem~\ref{t:CA-res-finite}.

\begin{proof}[Proof of (a) $\Rightarrow$ (b)]
Suppose that $M$ is residually finite.
Let $\tau_1, \tau_2 \in \CA(M,A)$ be two distinct cellular automata. 
\par
Since $M$ is residually finite, the periodic configurations in $A^M$ are dense in $A^M$
(see Theorem~\ref{t:res-finite-dynam}).
 As $\tau_1$ and $\tau_2$ are continuous and $A^M$ is Hausdorff,
this implies that there exists a periodic configuration $x_0 \in A^M$ such that 
$\tau_1(x_0) \neq \tau_2(x_0)$.
Consider the orbit $Y := Mx_0$ of $x_0$ under the $M$-shift.
As the set $Y$ is $M$-invariant, the equivalence relation $\gamma$ defined by
$$
\gamma := \{(m_1,m_2) \in M \times M : m_1 y = m_2 y \quad \text{for all }  y \in Y \} \subset M \times M
$$
is a congruence relation on $M$. 
Moreover, $\gamma$  is of finite index since $Y$ is finite.
Consider now  the associated $M$-invariant subset
$$
X := 
\Inv(\gamma) = \{x \in A^M : m_1 x = m_2 x \quad 
\text{ for all } (m_1,m_2) \in \gamma\} \subset A^M.
$$
Note that $X$ is finite since the congruence relation $\gamma$ is of finite index.
As every cellular automaton $\tau \in \CA(M,A)$ is $M$-equivariant,
restriction to $X$ yields a monoid morphism 
$\rho \colon \CA(M,A) \to \Map(X)$, where $\Map(X)$ denotes the \emph{symmetric monoid} of $X$, i.e., the set consisting of all maps $f \colon X \to X$ with the composition of maps as the monoid operation.
Observe that the monoid $\Map(X)$ is finite since $X$ is finite.
On the other hand, as $x_0 \in Y \subset X$ and $\tau_1(x_0) \not= \tau_2(x_0)$,
we have that $\rho(\tau_1) \not= \rho(\tau_2)$.
This shows that $\CA(M,A)$ is residually finite.  
\end{proof}

\begin{proof}[Proof of (b) $\Rightarrow$ (a)]
First observe that a semigroup is residually finite if and only if its opposite semigroup is
(this trivially follows from the fact that a semigroup is finite if and only if its opposite semigroup is).
Suppose now that the monoid $\CA(M,A)$ is residually finite.
Since there is an injective monoid anti-morphism $M \to \CA(M,A)$ 
(see Example~\ref{ex:tau-m})
and residual finiteness is hereditary, 
we deduce that the opposite monoid of $M$ is residually finite.
By the above observation, the monoid $M$ is itself  residually finite. 
\end{proof}

\begin{remark}
Let us observe that Corollary~\ref{c:sub-hopfian} and 
Corollary~\ref{c:endo-subsemi-res-fin}
become false if we drop the hypothesis that the subsemigroup of $\CA(M,A)$ is finitely generated, even if we restrict to the case where $M$ is the group $ \Z$ of integers
(the classical case studied in symbolic dynamics).
Indeed, let $A$ be a finite set with more than one element.
It can be shown, using the technique of \emph{markers} introduced in~\cite{hedlund}, that
the free group on two generators can be embedded in $\CA(\Z,A)$
(see~\cite[Theorem~2.4]{boyle-lind-rudolph} for a more general statement).
It follows that the free group $F_\infty$ on infinitely many generators $g_i$, $i \in \N$, 
can be also embedded in $\CA(\Z,A)$.
Now, the group $F_\infty$ is not Hopfian since the unique endomorphism 
$\psi \in \End(F_\infty)$   
satisfying $\psi(g_i) = g_{i - 1}$ if $i \geq 1$ and $\psi(g_0) = g_0$ 
is clearly surjective but not injective.
On the other hand, by using automorphisms of $F_\infty$ induced by permutations of its generators,
one sees that the automorphism group of $F_\infty$ contains a copy of the symmetric group $\Sym(\N)$ (the group of permutations of $\N$).
The  group  $\Sym(\N)$ is not residually finite
since, by Cayley's theorem, every countable group can be embedded in $\Sym(\N)$ and there exist countable groups that are not residually finite
(e.g., the additive group $\Q$ of rational numbers or the Baumslag-Solitar group
$BS(2,3) := \langle a,b : b a^2 b^{-1} = a^3 \rangle$).
Therefore, the monoid  $\End(F_\infty)$ is not residually finite either. 
\end{remark}


\begin{thebibliography}{10}

\bibitem{boyle-lind-rudolph}
{\sc M.~Boyle, D.~Lind, and D.~Rudolph}, {\em The automorphism group of a shift
  of finite type}, Trans. Amer. Math. Soc., 306 (1988), pp.~71--114.

\bibitem{carlisle}
{\sc W.~H. Carlisle}, {\em Residual finiteness of finitely generated
  commutative semigroups}, Pacific J. Math., 36 (1971), pp.~99--101.

\bibitem{ccs-surjunctive-monoids}
{\sc T.~Ceccherini-Silberstein and M.~Coornaert}, {\em On surjunctive monoids},
  arXiv:1409.1340.

\bibitem{evans-1970}
{\sc T.~Evans}, {\em Residually finite semigroups of endomorphisms}, J. London
  Math. Soc. (2), 2 (1970), pp.~719--721.

\bibitem{hedlund}
{\sc G.~A. Hedlund}, {\em Endomorphisms and automorphisms of the shift
  dynamical system}, Math. Systems Theory, 3 (1969), pp.~320--375.

\bibitem{hirsch}
{\sc K.~A. Hirsch}, {\em On infinite soluble groups. {III}}, Proc. London Math.
  Soc. (2), 49 (1946), pp.~184--194.

\bibitem{lallement}
{\sc G.~Lallement}, {\em On a theorem of {M}alcev}, Proc. Amer. Math. Soc., 30
  (1971), pp.~49--54.

\bibitem{lallement-nilpotent}
\leavevmode\vrule height 2pt depth -1.6pt width 23pt, {\em On nilpotency and
  residual finiteness in semigroups}, Pacific J. Math., 42 (1972),
  pp.~693--700.

\bibitem{malcev-representation}
{\sc A.~I. Mal'cev}, {\em On isomorphic matrix representations of
  infinite groups}, Rec. Math. [Mat. Sbornik] N.S., 8 (50) (1940),
  pp.~405--422.

\bibitem{malcev}
\leavevmode\vrule height 2pt depth -1.6pt width 23pt, {\em On homomorphisms
  onto finite groups}, Ivanov. Gos. Ped. Inst. U?. Zap.,  (1958), pp.~49--60.

\bibitem{stallings-notes}
{\sc J.~R. Stallings}, {\em Notes for math 257, geometric group theory, fall
  semester 2000 at uc berkeley}, http://math.berkeley.edu/~stall/math257.

\bibitem{varadarajan}
{\sc K.~Varadarajan}, {\em Some recent results on {H}opficity, co-{H}opficity
  and related properties}, in International {S}ymposium on {R}ing {T}heory
  ({K}yongju, 1999), Trends Math., Birkh\"auser Boston, Boston, MA, 2001,
  pp.~371--392.

\end{thebibliography}
\end{document}